\let\OLDthebibliography\thebibliography
\renewcommand\thebibliography[1]{
  \OLDthebibliography{#1}
  \setlength{\parskip}{0pt}
  \setlength{\itemsep}{1pt plus 1ex}
}
\setlist{itemsep=0pt, topsep=2pt}
\newcommand{\optionaldesc}[2]{%
  \phantomsection
  #1\protected@edef\@currentlabel{#1}\label{#2}%
}
\DeclareMathOperator{\ran}{ran}
\DeclareMathOperator{\dom}{dom}
\newcommand{\cof}{\operatorname{cof}}
\newcommand{\NN}{\mathbb{N}}
\newcommand{\lex}{\mathrm{lex}}
\newtheorem{theorem}{Theorem}[section]
\newtheorem{corollary}[theorem]{Corollary}
\newtheorem{lemma}[theorem]{Lemma}
\newtheorem{proposition}[theorem]{Proposition}
\newtheorem{observation}[theorem]{Observation}
\newtheorem{question}[theorem]{Question}
\newtheorem{remark}[theorem]{Remark}
\newcounter{capitalcounter}
\begin{document}
\title{Unavoidable structures in infinite tournaments}
\author{Alistair Benford\thanks{School of Mathematics, University of Birmingham, Birmingham, B15 2TT, UK. \texttt{a.s.benford@bham.ac.uk}} \and Louis DeBiasio\thanks{Department of Mathematics, Miami University, Oxford, OH. \texttt{debiasld@miamioh.edu}. Research supported in part by NSF grant DMS-1954170.} \and Paul Larson\thanks{Department of Mathematics, Miami University, Oxford, OH. \texttt{larsonpb@miamioh.edu}.}}

\maketitle

\begin{abstract}
We prove a strong dichotomy result for countably-infinite oriented graphs; that is, we prove that for all countably-infinite oriented graphs $G$, either (i) there is a countably-infinite tournament $K$ such that $G\not\subseteq K$, or (ii) every countably-infinite tournament contains a \emph{spanning} copy of $G$.  Furthermore, we are able to give a concise characterization of such oriented graphs.  Our characterization becomes even simpler in the case of transitive acyclic oriented graphs (i.e.~strict partial orders). 

For uncountable oriented graphs, we are able to extend the dichotomy result mentioned above to all regular cardinals $\kappa$; however, we are only able to provide a concise characterization in the case when $\kappa=\aleph_1$.  
\end{abstract}

\section{Introduction}

An oriented graph $G$ is an anti-symmetric directed graph (that is, if $(u,v)\in E(G)$, then $(v,u)\not\in E(G)$).  For other standard terminology, see Section \ref{sec:not}.

A finite oriented graph is \emph{unavoidable} if there exists a positive integer $N$ such that $G$ is subgraph of every tournament on at least $N$ vertices.  In the case of finite oriented graphs $G$, it is easy to see that $G$ is unavoidable if and only if $G$ is acyclic.  Indeed, an oriented graph $G$ on $n$ vertices is acyclic if and only if $G$ is a subgraph of the transitive tournament on $n$ vertices, and it is well-known (see \cite{EM}) that every tournament on at least $2^{n-1}$ vertices contains a transitive tournament of order $n$.  This leads to the following natural definition.  For a finite acyclic oriented graph $G$, let $\vec{r}(G)$ be the smallest integer $N$ such that every tournament on $N$ vertices contains a copy of $G$.  While there were some earlier sporadic results, the systematic study of bounding $\vec{r}(G)$ for general $G$ began with \cite{LSS} and \cite{SS}.  A few of the major results in this area are as follows: for the transitive tournament $\vec{K}_n$ on $n$ vertices, $2^{n/2}\leq \vec{r}(\vec{K}_n)\leq 2^{n-1}$ \cite{EM}, for every oriented path $P$ on $n\geq 8$ vertices, $\vec{r}(P)=n$ \cite{HT}, and for every oriented tree $T$ on $n$ vertices (for sufficiently large $n$), $\vec{r}(T)\leq 2n-2$, which is best possible for certain oriented trees $T$ \cite{KMO} (for a refinement of this result, see \cite{DH}, \cite{BM1}, \cite{BM2}).

The goal of this paper is to extend the notion of unavoidability to infinite oriented graphs.  Given an infinite cardinal $\kappa$, we say that an oriented graph $G$ is \emph{$\kappa$-unavoidable} if $G$ is a subgraph of every tournament of cardinality $\kappa$.  When $\kappa=\omega$ (i.e.~when $\kappa$ is countably-infinite), we simply say $G$ is \emph{unavoidable}.  It turns out that in the countably-infinite case, it is not true that an oriented graph $G$ is unavoidable if and only if $G$ is acyclic.  So our first goal is to characterize which countably-infinite oriented graphs are unavoidable.  Having done that, the next goal would be to get quantitative results for unavoidable oriented graphs along the lines of the results mentioned above.  For instance, motivated by recent Ramsey-type results regarding monochromatic subgraphs in edge-colorings of $K_{\NN}$ (\cite{DM}, \cite{CDM}, \cite{CDLL}, \cite{L}, \cite{BL}), it would be natural try to prove that there exists $d>0$ such that for every countably-infinite unavoidable oriented tree $T$ and every tournament $K$ on $\NN$, there is an embedding $\varphi:T\to K$ such that $\varphi(V(T))\subseteq \NN$ has upper density at least $d$.

So it is perhaps surprising that we prove the following result which both characterizes unavoidable oriented graphs and proves that all such countably-infinite unavoidable oriented graphs are unavoidable in a very strong sense (in a way which makes the quantitative question mentioned above irrelevant).  We say that an countably-infinite oriented graph $G$ is \emph{strongly unavoidable} if $G$ is a spanning subgraph of every countably-infinite tournament.

\begin{theorem}\label{thm:main}
Let $G$ be an countably-infinite oriented graph.  The following are equivalent:
\begin{enumerate}[label = {(\bfseries{C\arabic{enumi}})}]
\item\label{c1} $G$ is strongly unavoidable.
\item\label{c2} $G$ is unavoidable.
\item\label{c3} $G\subseteq K_{\omega}$ and $G\subseteq K_{\omega^*}$.
\item\label{c4} $G$ is acyclic, locally-finite, and has no infinite directed paths. 
\end{enumerate}
\end{theorem}

Note that trivially, \ref{c1}$\Rightarrow$\ref{c2}$\Rightarrow$\ref{c3} and is it not hard to see that \ref{c3}$\Rightarrow$\ref{c4}.  As we will see, Ramsey's theorem implies \ref{c2}$\Leftrightarrow$\ref{c3} and another classic order-theoretic result implies that \ref{c3}$\Leftrightarrow$\ref{c4}.  The surprising part, and the main result of the paper, is that \ref{c4}$\Rightarrow$\ref{c1}.

A strict partial order $P=(V,\prec)$ is a relation $\prec$ on a set $V$ which is anti-reflexive, anti-symmetric, and transitive.  Defined in this way, every strict partial order $P=(V,\prec)$ is an acyclic oriented graph.  While it is not the case that every acyclic oriented graph is a strict partial order, there is an equivalence relation on acyclic oriented graphs, where $G\sim F$ if and only if the transitive closures of $G$ and $F$ are isomorphic, where the equivalence classes correspond to strict partial orders.  Note that if $G$ is acyclic, locally-finite, and has no infinite directed paths, then the transitive closure of $G$ is a strict partial order in which every element is comparable to finitely many others (note that we aren't using the phrase ``locally-finite'' in the context of partial orders since this seems to have a different meaning in the literature) and every strict partial order in which every element is comparable to finitely many others is acyclic, locally-finite, and has no infinite directed paths. So we have the following corollary of Theorem \ref{thm:main}.

\begin{corollary}\label{cor:poset}
Let $P=(V,\prec)$ be an countably-infinite strict partial order.  Then $P$ is strongly unavoidable if and only if $P$ is unavoidable if and only if every element in $P$ is comparable to finitely many others.
\end{corollary}

For the uncountable case, the situation is more complicated and will be discussed in much greater detail in Section \ref{sec:uncount}.  However, the main results from Section \ref{sec:uncount} are simple enough to state.  The first says that for regular cardinals, the statements analogous to \ref{c1} and \ref{c2} are equivalent and the statements analogous to \ref{c3} and \ref{c4} are equivalent (and trivially the statement analogous to \ref{c2} implies the statement analogous to \ref{c3}).  

\begin{theorem}\label{thm:main2}
Let $\kappa$ be a uncountable regular cardinal and let $G$ be an oriented graph with $|V(G)|=\kappa$.  
\begin{enumerate}
\item $G$ is strongly $\kappa$-unavoidable if and only if $G$ is $\kappa$-unavoidable.
\item $G\subseteq K_{\kappa}$ and $G\subseteq K_{\kappa^*}$ if and only if $G$ is acyclic, has no infinite directed paths, and every vertex has degree less than $\kappa$.
\end{enumerate}
\end{theorem}

Another main result from Section \ref{sec:uncount} is that for $\kappa=\aleph_1$, the analogue of Theorem \ref{thm:main} holds; that is, the statement analogous to \ref{c4} implies the statement analogous to \ref{c1}.

\begin{theorem}\label{thm:main3}
Let $G$ be an oriented graph with $|V(G)|=\aleph_1$.  If $G$ is acyclic, has no infinite directed paths, and every vertex has degree less than $\aleph_1$, then $G$ is strongly $\aleph_1$-unavoidable.
\end{theorem}

On the other hand when $\kappa=\aleph_2$, the analogue of Theorem \ref{thm:main3} does not hold if the Continuum Hypothesis (CH) fails; that is, if $2^{\aleph_{0}} \geq \aleph_{2}$. We do not know if CH is needed for this result, or whether the Generalized Continuum Hypothesis (GCH) implies the corresponding version of Theorem \ref{thm:main3} for cardinals greater than $\aleph_{1}$. 

\subsection{Notation}\label{sec:not}

An (oriented) graph is \emph{locally-finite} if every vertex is incident with finitely many edges. A \emph{directed cycle} on $n$ vertices is the oriented graph $\vec{C}_n$ with $V(\vec{C}_n)=\{x_1, \dots, x_n\}$ and $E(\vec{C}_n)=\{(x_1,x_2), \dots, (x_{n-1}, x_n), (x_n, x_1)\}$. Say that an oriented graph is \emph{acyclic} if it contains no directed cycles.  A \emph{directed path} is an orientation of a finite, one-way-infinite, or two-way-infinite path having the property that there are no vertices of out-degree 2 or in-degree 2.  The infinite directed path with exactly one vertex of in-degree 0 is called the \emph{infinite forward directed path} and the infinite directed path with exactly one vertex of out-degree 0 is called the \emph{infinite backward directed path}.

Say that an oriented graph $G$ is \emph{connected} if the underlying graph (i.e. the symmetric closure of $G$) is connected.  Note that this is typically referred to as ``weak connectivity'', but since we are only considering acyclic graphs $G$, the notion of ``strong connectivity'' will not arise.

Given oriented graphs $H$ and $G$, an \emph{embedding} of $H$ into $G$, denoted $\varphi:H\to G$ is an injection $\varphi:V(H)\to V(G)$ with the property that $(u,v)\in E(H) \Rightarrow (\varphi(u), \varphi(v))\in E(G)$.  We say $H$ is a \emph{subgraph} of $G$, denoted $H\subseteq G$, if there exists an embedding of $H$ into $G$.  We say that $H$ is a \emph{spanning subgraph} of $G$ if there exists a surjective embedding of $H$ into $G$.  

Given an oriented graph $G$ and $S\subseteq V(G)$, we let $G[S]$ be the subgraph induced by $S$.  Given $v\in V(G)$, we let $G-v=G[V(G)\setminus \{v\}]$, and more generally, given $S\subseteq V(G)$, we let $G-S=G[V(G)\setminus S]$.  If $G$ is an oriented graph and $(u,v)\in E(G)$, then we say that $v$ is an \emph{out-neighbor} of $u$, and that $u$ is an \emph{in-neighbor} of $v$. Given $v\in V(G)$, the \emph{out-neighborhood} of $v$, written $N^+(v)$, is the set of out-neighbors of $v$ in $V(G)$, and the \emph{in-neighborhood} of $v$, written $N^-(v)$ is the set of in-neighbors of $v$ in $V(G)$. Throughout, we use $+$ and $-$ interchangeably with `out' and `in' respectively. For each $\diamond\in\{+,-\}$, the \emph{$\diamond$-degree} of $v$ in $G$ is $d^\diamond(v)=|N^\diamond(v)|$ and the \emph{common $\diamond$-neighborhood} of a set $X\subseteq V(G)$ is $N^\diamond(X)=\bigcap_{v\in X}N^\diamond(v)$.

Given an acyclic oriented graph $G$ and $u\in V(G)$, let $$\Gamma^+(u)=\{v\in V(G): \text{there exists a directed path from $u$ to $v$}\}$$ and let $$\Gamma^-(u)=\{v\in V(G): \text{there exists a directed path from $v$ to $u$}\}$$ (equivalently, $\Gamma^\diamond(u)$ is the $\diamond$-neighborhood of $u$ in the transitive, reflexive closure of $G$).

Given a strict total order $\tau=(V,\prec)$, let $K_{\tau}$ be the tournament on $V$ where $(u,v)\in E(K_\tau)$ if and only if $u\prec v$.  We write $\tau^*$ to be the converse of $\tau$; that is, $\tau^*=(V,\succ)$.  We say that $K_{\tau}$ and $K_{\tau^*}$ are the transitive tournaments of type-$\tau$.

Throughout the paper, we assume the axiom of choice whenever necessary.  We use the von Neumann definition of ordinals where an ordinal is the strictly well-ordered set of all smaller ordinals.  Thus, given an ordinal $\lambda$, the definition of $K_{\lambda}$ and $K_{\lambda^*}$ is given in the previous paragraph.  As is standard, we let $\omega$ be the first infinite ordinal.  Given a cardinal $\kappa$, we view $\kappa$ as the smallest ordinal of cardinality $\kappa$.  We let $\kappa^+$ be the smallest cardinal greater than $\kappa$, and we let $2^\kappa$ be the cardinality of the power set of $\kappa$.  

We say that an infinite cardinal $\kappa$ is \emph{regular} if it cannot be written as the union of fewer than $\kappa$ many sets, each of cardinality less than $\kappa$.  If $\kappa$ is not regular, we say that it is \emph{singular}.  Given a limit ordinal $\alpha$, 
the \emph{cofinality} of $\alpha$, denote $\cof(\alpha)$, is the smallest cardinality of a cofinal subset of $\alpha$ ($S\subseteq \alpha$ is \emph{cofinal} if for all $\beta < \alpha$, there exists a $\gamma \in S$ such that $\beta \leq \gamma$).  With this terminology, we may equivalently say that $\kappa$ is singular if and only if $\cof(\kappa)<\kappa$.

By $\mathbb{N}$ we mean the set of positive integers. 

\section{Countably-infinite oriented graphs}\label{sec:count}

\subsection{Characterizing unavoidable oriented graphs}\label{sec:charactizing-unavoidability}

In this section we will prove that \ref{c2}$\Leftrightarrow$\ref{c3}$\Leftrightarrow$\ref{c4}.

The following lemma can essentially be found in \cite[2.15.1]{F} (we give the more general statement later as Observation \ref{obs:u3u4} and Proposition \ref{prop:MP}).  However, since the proof of this special case is straightforward, we give it here.  

\begin{lemma}\label{lem:order1}
For every countably-infinite oriented graph $G$, \ref{c3}$\Leftrightarrow$\ref{c4}; that is, $G\subseteq K_{\omega}$ and $G\subseteq K_{\omega^*}$ if and only if $G$ is acyclic, locally-finite, and has no infinite directed paths.
\end{lemma}

\begin{proof}
\ref{c3}$\Rightarrow$\ref{c4}: First note that if $G$ has a cycle, then $G\not\subseteq K_{\omega}$ and $G\not\subseteq K_{\omega^*}$.  If $G$ has a vertex of infinite in-degree, then $G\not\subseteq K_{\omega}$, and if $G$ has a vertex of infinite out-degree, then $G\not\subseteq K_{\omega^*}$.  If $G$ has an infinite directed path with the first vertex having out-degree 0, then $G\not\subseteq K_{\omega}$, and if $G$ has an infinite directed path with the first vertex having in-degree 0, then $G\not\subseteq K_{\omega^*}$.

\ref{c4}$\Rightarrow$\ref{c3}: Now suppose $G$ is acyclic, locally-finite, and has no infinite directed paths.  Let $(v_i)_{i\in \omega}$ be an enumeration of $V(G)$.  For all $v_i\in V(G)$, let $f(v_{i})=\max\{j: v_{j}\in \Gamma^-(v_{i})\}$ and note that by the assumptions on degrees and the fact that there are no infinite directed paths we have that $f(v_{i})$ is finite.

We will produce an embedding $\varphi:G\to K_{\omega}$ as follows:  Let $i_0\in \omega$ be minimum such that $v_{i_0}$ has in-degree 0 in $G$ and set $\varphi(v_{i_0})=0$.  On step $j\geq 1$, let $i_j\in \omega$ be minimum such that $v_{i_j}$ has in-degree 0 in $G-\{v_{i_0}, \dots, v_{i_{j-1}}\}$ and set $\varphi(v_{i_j})=j$.  

In the resulting embedding we have the property that for all $i\in \omega$, $\varphi^{-1}(i)$ has no in-neighbors $v$ with $\varphi(v)>i$ and thus we have the desired embedding, provided that $V(G)=\dom \varphi$.  However, this holds because for all $v\in V(G)$, we will assign a value for $\varphi(v)$ by step $f(v_i)$.

An embedding of $G$ into $K_{\omega^*}$ can be constructed similarly.  
\end{proof}

The following lemma shows that it is possible to use a Ramsey-type result to get a result about transitive tournaments.\footnote{On the surface, it is strictly stronger because it is possible to order the vertices of the tournament $K$ and have a copy of $K_{\tau}$ or $K_{\tau^*}$ which doesn't obey the ordering.}  We state it in a form which is more general than what is needed for this section because we will make reference to it again later in a more general setting.  While this folklore result surely appears in the literature, we include a proof for completeness.

\begin{lemma}\label{lem:ramsey}
Let $\sigma=(S,<_\sigma)$ be a strict total order.  If for every 2-coloring of the pairs of elements in $S$, there exists a monochromatic copy of the strict total order $\tau=(T, <_\tau)$, then for every tournament $K$ of cardinality $|S|$ we have $K_{\tau}\subseteq K$ or $K_{\tau^*}\subseteq K$.
\end{lemma}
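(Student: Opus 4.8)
The plan is to transfer the combinatorial structure of the tournament $K$ into a $2$-coloring of the pairs of $S$, apply the Ramsey hypothesis, and then read the resulting monochromatic copy of $\tau$ back as a transitive sub-tournament. First I would fix a bijection identifying the vertex set of $K$ with $S$, so that $<_\sigma$ becomes a strict total order on $V(K)$; this is legitimate precisely because $K$ has cardinality $|S|$. Using $<_\sigma$, every unordered pair $\{x,y\}$ has a canonical ordering, say $x<_\sigma y$, so I can define a $2$-coloring $c$ of the pairs of $S$ by declaring $c(\{x,y\})$ to be the color \emph{forward} when $(x,y)\in E(K)$ (the tournament agrees with $<_\sigma$) and \emph{backward} when $(y,x)\in E(K)$ (the tournament disagrees with $<_\sigma$). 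Since $K$ is a tournament, exactly one of these holds for each pair, so $c$ is well defined.

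Next I would invoke the hypothesis to obtain a monochromatic copy of $\tau$, by which I mean a subset $W\subseteq S$ such that $(W,<_\sigma\rs W)$ is order-isomorphic to $(T,<_\tau)$ and every pair inside $W$ receives the same color; let $\psi\colon T\to W$ denote the order isomorphism, so that $a<_\tau b$ if and only if $\psi(a)<_\sigma \psi(b)$. Then I would split into two cases according to the monochromatic color. If all pairs of $W$ are forward, then for every $u,v\in T$ with $u<_\tau v$ we have $\psi(u)<_\sigma\psi(v)$ and hence $(\psi(u),\psi(v))\in E(K)$; since $(u,v)\in E(K_\tau)$ is exactly the condition $u<_\tau v$, the map $\psi$ is an embedding of $K_\tau$ into $K$, giving $K_\tau\subseteq K$. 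If instead all pairs of $W$ are backward, then for $u,v\in T$ with $v<_\tau u$ we have $\psi(v)<_\sigma\psi(u)$ and hence $(\psi(u),\psi(v))\in E(K)$; since $(u,v)\in E(K_{\tau^*})$ is exactly the condition $v<_\tau u$, the same map $\psi$ embeds $K_{\tau^*}$ into $K$, giving $K_{\tau^*}\subseteq K$. In either case the desired conclusion holds.

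This argument is essentially bookkeeping, so I do not anticipate a genuine obstacle; the only point demanding care is the directional translation in the backward case, where one must check that reversing the agreement of the orientation yields $K_{\tau^*}$ rather than $K_\tau$. The safeguard is to keep $\psi$ fixed as the \emph{order} isomorphism with respect to $<_\sigma$ throughout and to verify the embedding condition directly from the paper's definitions of $K_\tau$, of $\tau^*=(T,\succ)$, and of an embedding, rather than informally saying ``the order is reversed.'' The other subtlety worth flagging explicitly is that a ``monochromatic copy of the strict total order $\tau$'' must be read as an order-isomorphic (not merely isomorphic-as-a-set) copy with respect to $<_\sigma$, which is what makes $\psi$ available and thereby couples the Ramsey conclusion to the orientation classes of $K$.
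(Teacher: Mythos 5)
Your proof is correct and follows essentially the same route as the paper's: identify $V(K)$ with $S$ via a bijection, color each pair according to whether the tournament edge agrees or disagrees with $<_\sigma$, apply the Ramsey hypothesis, and read the monochromatic copy of $\tau$ as an embedding of $K_\tau$ (agree case) or $K_{\tau^*}$ (disagree case). The paper states this more tersely; your added verification of the backward case against the definitions of $K_{\tau^*}$ and of an embedding is exactly the detail the paper leaves implicit.
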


\begin{proof}
Let $K$ be a tournament of cardinality $|S|$ and take an arbitrary bijection $\varphi:V(K)\to S$.  For $(u,v)\in E(K)$, if $\varphi(u)<_\sigma \varphi(v)$, color $(u,v)$ red, and if $\varphi(u)>_\sigma \varphi(v)$, color $(u,v)$ blue.  By the assumption, there is a monochromatic copy of $\tau$.  If the copy is red, then we have $K_{\tau}\subseteq K$, and if the copy is blue, then we have $K_{\tau^*}\subseteq K$.
\end{proof}

\begin{lemma}\label{thm:unavoidable-characterization}
For every countably-infinite oriented graph $G$, \ref{c2}$\Leftrightarrow$\ref{c3}; that is, $G$ is unavoidable if and only if $G\subseteq K_{\omega}$ and $G\subseteq K_{\omega^*}$.
\end{lemma}

\begin{proof}
\ref{c2}$\Rightarrow$\ref{c3}: If $G\not\subseteq K_{\omega}$ or $G\not\subseteq K_{\omega^*}$, then $G$ is avoidable.  

\ref{c3}$\Rightarrow$\ref{c2}:  Suppose $G\subseteq K_{\omega}$ and $G\subseteq K_{\omega^*}$ and let $K$ be a countably-infinite tournament.  By Ramsey's theorem and Lemma \ref{lem:ramsey} (with $\sigma=\tau=\omega$) we have $K_{\omega}\subseteq K$ or $K_{\omega^*}\subseteq K$.  Either way, we have $G\subseteq K$.  
\end{proof}

\subsection{Unavoidable oriented graphs are strongly unavoidable}\label{sec:strong-unavoidability}

In this section we prove \ref{c4}$\Rightarrow$\ref{c1}.  To prepare for the proof, we first need a structural result about oriented graphs satisfying \ref{c4}.  

Given an countably-infinite acyclic weakly-connected oriented graph $G$, a \emph{$\pm$-partition} of $G$ is a partition $\{C_i:i\in \NN\}$ of $V(G)$ such that the following properties hold:
\stepcounter{capitalcounter}
\begin{enumerate}[label = {\bfseries{\Alph{capitalcounter}\arabic{enumi}}}]
\item\label{closure-1} For all $i\in \NN$, $C_i$ is finite and non-empty. 
\item\label{closure-2} For all $(u,v)\in E(G)$, there exists $i\in \NN$ such that $\{u,v\}\subseteq C_{i}\cup C_{i+1}$.
\item\label{closure-3} If $i$ is odd, then every vertex in $C_i$ has in-degree 0 to $C_{i-1}\cup C_{i+1}$, and if $i$ is even, then every vertex in $C_i$ has out-degree 0 to $C_{i-1}\cup C_{i+1}$.  
\item\label{closure-4} If $i$ is odd, then there exists a vertex in $C_i$ with in-degree 0 in $G$, and if $i$ is even there exists a vertex in $C_i$ with out-degree 0 in $G$.
\end{enumerate}
If $i$ is odd, we say that $C_i$ has \emph{type} $+$, and if $i$ is even, we say that $C_i$ has \emph{type} $-$.  

Likewise one can define a \emph{$\mp$-partition} by switching every instance of in/out in the above definition.  We note that a similar definition for finite oriented trees was given by Dross and Havet \cite{DH}.

\begin{lemma}\label{lem:cover2}Let $G$ be a countably-infinite oriented graph.
If $G$ is weakly-connected, acyclic, locally-finite, and has no infinite directed paths, then for every vertex $v$ of in-degree 0, $G$ has a $\pm$-partition with $C_1=\{v\}$, and for every vertex $v$ of out-degree 0, $G$ has a $\mp$-partition with $C_1=\{v\}$. 
\end{lemma}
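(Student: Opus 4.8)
The plan is to build the required partition by a recursive ``alternating breadth-first'' process that grows finite layers outward from $v$, taking the backward-closure (under $\Gamma^-$) for the odd, type-$+$ layers and the forward-closure (under $\Gamma^+$) for the even, type-$-$ layers. I will treat the case where $v$ has in-degree $0$ and produce a $\pm$-partition; the $\mp$-partition for a vertex of out-degree $0$ then follows by reversing every edge of $G$. As a preliminary I would record that $\Gamma^+(u)$ and $\Gamma^-(u)$ are finite for every $u\in V(G)$: the finite directed paths starting at $u$ form a finitely-branching tree (out-degrees are finite by local-finiteness), which by König's lemma must be finite, as an infinite branch would be an infinite forward directed path; the case of $\Gamma^-(u)$ is symmetric. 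I also note that following in-edges from any vertex terminates at a vertex of in-degree $0$, and following out-edges terminates at a vertex of out-degree $0$.

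The construction is as follows. Set $C_1=\{v\}$ and write $D_n=\bigcup_{i\le n}C_i$. Given $C_1,\dots,C_n$, if $n$ is odd put
\[
  C_{n+1}=\bigcup\Big\{\Gamma^+(z):z\in\big(\textstyle\bigcup_{w\in C_n}N^+(w)\big)\setminus D_n\Big\},
\]
and if $n$ is even put
\[
  C_{n+1}=\bigcup\Big\{\Gamma^-(z):z\in\big(\textstyle\bigcup_{w\in C_n}N^-(w)\big)\setminus D_n\Big\}.
\]
That is, each new layer is the forward- (respectively backward-) closure of the as-yet-unplaced out- (respectively in-) neighbors of the previous layer. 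Finiteness of every $C_i$ is then immediate, since $C_n$ is finite and each $\Gamma^{\pm}(z)$ is finite.

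I would then maintain, by induction on $n$, the following invariants: (i) the layers are pairwise disjoint; (ii) each odd layer is closed under $\Gamma^-$ and each even layer is closed under $\Gamma^+$; and (iii) for each completed layer $C_j$ ($j<n$), all out-neighbors of a source layer, resp. all in-neighbors of a sink layer, already lie in $D_{j+1}$. Invariant (ii) holds by definition, and it delivers both \ref{closure-3} (a source layer has no in-neighbors, and a sink layer no out-neighbors, in the adjacent layers) and \ref{closure-4} (from any vertex of a layer, following in-edges, resp. out-edges, reaches a global source, resp. sink, which by closure remains inside that layer). Invariant (iii) is what forces \ref{closure-2}: any edge must join equal or consecutive layers, for an edge reaching a non-adjacent layer would violate either a closure in (ii) or the completeness in (iii).

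The heart of the argument, and the step I expect to be the main obstacle, is the disjointness claim (i): that for a new out-neighbor $z\notin D_n$ of a source layer $C_n$ one has $\Gamma^+(z)\cap D_n=\emptyset$, and dually. I would argue by contradiction, using acyclicity to locate along a directed path from $z$ into $D_n$ the first crossing edge $a\to b$ with $a\notin D_n$ and $b\in C_j$, $j\le n$; if $C_j$ is a source this contradicts its $\Gamma^-$-closure, while if $C_j$ is a sink then $a$ is an unplaced in-neighbor of $C_j$, contradicting invariant (iii), since $C_{j+1}$ was built precisely to absorb such vertices. Acyclicity is essential here, both to make sense of the first crossing edge and to ensure the closures never loop back. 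Finally, non-emptiness and exhaustiveness follow from weak-connectivity: if some $C_{n+1}$ were empty, then (ii) and (iii) would make $D_n$ a union of connected components of $G$ containing $v$, forcing $D_n=V(G)$, impossible since $D_n$ is finite and $V(G)$ is infinite; and the same boundary analysis applied to $W=\bigcup_n C_n$ shows any edge leaving $W$ would contradict (ii) or (iii), so $W=V(G)$, completing the partition.
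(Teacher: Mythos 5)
Correct, and essentially the paper's own approach: both proofs grow the partition as alternating forward/backward reachability closures starting from $\{v\}$, with finiteness of each part coming from local-finiteness plus the absence of infinite directed paths (K\"onig's lemma), exhaustiveness and non-emptiness from weak-connectivity, and the source/sink requirement \ref{closure-4} from the closure property of each part together with acyclicity. The only difference is bookkeeping: the paper forms each new part as the closure of the \emph{entire} previous part minus the two preceding parts, whereas you take closures of the not-yet-placed neighbors of the previous part and verify disjointness explicitly via the first-crossing-edge argument; the two constructions produce the same sets, and your version just makes the disjointness verification (which the paper leaves implicit) more detailed.
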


\begin{proof}
Since $G$ is acyclic and has no infinite directed paths, the set of vertices with in-degree 0 is non-empty; let $v$ be a vertex of in-degree 0 and set $C_1=\{v\}$.  For even $i>1$, let $C_i=\left(\bigcup_{v\in C_{i-1}}\Gamma^+(v)\right)\setminus (C_{i-1}\cup C_{i-2})$, and for odd $i>1$, let $C_i=\left(\bigcup_{v\in C_{i-1}}\Gamma^-(v)\right)\setminus (C_{i-1}\cup C_{i-2})$. Note that since $G$ is locally-finite, and has no infinite directed paths, each $C_i$ is finite. In addition, because $G$ is weakly-connected, $\{C_i:i\in\NN\}$ is a partition of $V(G)$, and each $C_i$ is non-empty. Therefore, \ref{closure-1} holds.

Suppose, for some $i<j$, that $(u,v)\in E(G)$ with $u\in C_i$ and $v\in C_j$. Then, we must have that $i$ is odd (else $v\in C_i$) and $j=i+1$. On the other hand, if $(v,u)\in E(G)$ is such that $u\in C_i$ and $v\in C_j$ for some $i<j$, then we must have that $i$ is even (else $v\in C_i$) and $j=i+1$. Therefore, we deduce that \ref{closure-2} and \ref{closure-3} hold.

Finally, note that since $C_i$ is finite and $G$ is acyclic, $G[C_i]$ has a vertex $u_i$ of in-degree 0 in $G[C_i]$ and a vertex $v_i$ of out-degree 0 in $G[C_i]$.  Thus, by \ref{closure-3}, if $i$ is even, then $v_i$ has out-degree 0 in $G$, and if $i$ is odd, then $u_i$ has in-degree 0 in $G$. Therefore, \ref{closure-4} holds, and $\{C_i:i\in\NN\}$ is a $\pm$-partition with $C_1=\{v\}$.

Likewise by switching every instance of in/out in the above proof, we get that for every vertex $v$ of out-degree 0, $G$ has a $\mp$-partition with $C_1=\{v\}$.
\end{proof}

\begin{theorem}\label{thm:unavoidable-bijection}
For every countably-infinite oriented graph $G$, \ref{c4}$\Rightarrow$\ref{c1}; that is, if $G$ is acyclic, locally-finite, and has no infinite directed paths, then $G$ is a spanning subgraph of every countably-infinite tournament.  
\end{theorem}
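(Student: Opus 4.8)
The plan is to build a bijection $\phi\colon V(G)\to V(K)$ by a back-and-forth recursion that is organized into finite blocks using the $\pm$- and $\mp$-partitions of Lemma~\ref{lem:cover2}. As a starting point note that Lemma~\ref{lem:order1} already gives more than advertised: the embedding it produces is onto $\omega$, so $G$ is in fact a \emph{spanning} subgraph of both $K_{\omega}$ and $K_{\omega^*}$. The whole difficulty is therefore to replace the two transitive targets by an arbitrary countable tournament $K$, which by Ramsey's theorem and Lemma~\ref{lem:ramsey} contains $K_{\omega}$ or $K_{\omega^*}$ but need not resemble either one globally or spanningly.

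First I would reduce to the case that $G$ is weakly-connected and infinite. A general $G$ decomposes into at most countably many weakly-connected components, each still acyclic, locally-finite, and free of infinite directed paths; the block scheme below can then be run on the components in a dovetailed fashion so that their images partition $V(K)$. In the weakly-connected infinite case I fix a vertex $r$ of in-degree $0$ and invoke Lemma~\ref{lem:cover2} to obtain a $\pm$-partition $\{C_i\}$ with $C_1=\{r\}$, keeping the $\mp$-partition available as well. The embedding then processes $C_1,C_2,\dots$ in order, maintaining a finite partial embedding together with the invariant that the set of unused vertices of $K$ is still infinite. By \ref{closure-2} and \ref{closure-3}, once $\phi(C_{i-1})$ is fixed, placing $C_i$ means embedding the finite acyclic graph $G[C_i]$ on fresh vertices so that each $v\in C_i$ lands in $N^+$ of the images of its $C_{i-1}$-neighbors (if $i$ is even) or in $N^-$ of those images (if $i$ is odd). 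A useful source of slack is that the embedding condition is one-directional, so non-edges of $G$ impose no constraint at all. Surjectivity would be enforced by dovetailing: at each stage I also arrange to cover the least-indexed still-unused vertex of $K$, absorbing these finitely many commitments into the current block.

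The main obstacle is guaranteeing that the required common neighborhoods are infinite at every step, so that the recursion never gets stuck. In an arbitrary tournament this fails badly: in $K_{\omega^*}$ no nonempty finite set has even one common out-neighbor to spare, so a rigid $\pm$-scheme dies immediately, while in $K_\omega$ the symmetric failure kills the $\mp$-scheme. The device I would use to beat this is adaptivity. At the moment $\phi(C_{i-1})$ is chosen, the unused part of $K$ is still an infinite tournament, so Ramsey's theorem produces inside it a transitive ladder of type $\omega$ or of type $\omega^*$; I would place the interface vertices of $C_{i-1}$ at the bottom (respectively top) of such a ladder, so that the next block's interface has \emph{infinitely many} admissible images and the coverage commitments can always be accommodated. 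Because the ladder handed to us may be $\omega$ or $\omega^*$ while the block parity prescribes $N^+$ or $N^-$, I would reconcile the two by switching between the $\pm$- and $\mp$-partition — equivalently, by choosing whether to grow $G$ from the current frontier into $\Gamma^+$ or into $\Gamma^-$, a move made legitimate by the fact that $G$ has no infinite directed path (so every directed path, hence every local ``run'' in one direction, is finite).

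I expect the genuine hard part to be exactly this reconciliation: matching the forced structure of a connected $G$ against whichever ladder type the remaining tournament happens to offer, while simultaneously maintaining the infinite-reservoir invariant and the surjectivity dovetailing. The worry is that a single frontier vertex may have unembedded neighbors on only one side, so the growth direction is not freely ours to choose at a prescribed vertex; the argument must therefore use the finiteness of every block and every interface (from local-finiteness together with the absence of infinite directed paths) to show that some admissible extension in the available direction always exists, and that no vertex of $K$ is permanently inaccessible. Making that bookkeeping precise, rather than the block placement itself, is where I would expect the real work to lie.
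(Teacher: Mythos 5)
Your setup matches the paper's: reduce to the weakly-connected case, fix the $\pm$-partition from Lemma~\ref{lem:cover2}, embed block by block, and dovetail to force surjectivity. But the engine that makes the recursion run forever is missing, and the substitute you propose does not work. The direction in which the next block attaches to the current frontier is dictated by $G$, not by you: by \ref{closure-3} the blocks alternate source/sink, so the required neighborhoods alternate out, in, out, in, $\ldots$. Once $C_1,\dots,C_{i-1}$ have been embedded according to the $\pm$-partition, ``switching to the $\mp$-partition'' is not a legal move --- the unembedded neighbors of the frontier attach to it in one fixed direction, and re-partitioning $G$ cannot change that. Consequently your invariant, that the next block always has \emph{infinitely many} admissible images, is unattainable: in $K_{\omega}$ (a perfectly good countable tournament) every common in-neighborhood of a nonempty finite set is finite, so at every in-step the invariant fails no matter where you placed the previous block or which ladder you found. (The construction can still survive inside $K_{\omega}$, but only by leaving pre-planned \emph{finite} gaps below each placed block --- a mechanism your write-up never introduces, and which by itself still does not handle the coverage commitments in a general $K$.)

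The paper's proof supplies exactly the missing device. Before embedding anything, it classifies the vertices of $K$: it defines types $\ast_i\in\{+,-\}$ inductively so that, setting $V^+=\{u_i:\ast_i=+\}$ and $V^-=\{u_i:\ast_i=-\}$, every finite family of typed vertices has infinite common typed neighborhood (for all finite nonempty $X\subseteq V^\diamond$ and $Y\subseteq V^\ast$, the set $N^\diamond(X)\cap N^{\ast}(Y)$ is infinite); this is the ultrafilter trick made explicit, and it holds once and for all, independently of any later choices. The embedding then maintains the invariant \ref{embed-2} that each frontier block lands inside the class of matching type, which is what guarantees that both the forced alternating steps and --- crucially --- the coverage steps never get stuck: to absorb $u_j$, the key property gives that $N^{\ast_j}(u_j)\cap N^{\diamond_{i_{j-1}}}(\phi(C_{i_{j-1}}))$ is infinite, and one maps onto $u_j$ a vertex of $G$ having in-degree $0$ (or out-degree $0$) in all of $G$, which exists in every block of the appropriate parity by \ref{closure-4}, skipping ahead a bounded number of blocks to fix parity. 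Your proposal contains no analogue of this classification, no argument that the vertex you must cover has an infinite appropriate neighborhood inside your current working set, and no reason the remainder of the construction can continue inside that neighborhood. These are not bookkeeping details but the actual content of the theorem, as your final paragraph implicitly concedes.
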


\begin{proof}
Suppose $G$ is acyclic, locally-finite, and has no infinite directed paths. If $G$ is not weakly-connected, we can make it so while maintaining the three properties (say by choosing a vertex $v_i$ from each component $H_i$ of $G$ and adding an anti-directed path on $v_1,v_2,\dots$). Let $K$ be a countably-infinite tournament and let $(u_i)_{i\in \NN}$ be an enumeration of $V(K)$. Define $\ast_1,\ast_2,\ast_3,\ldots$ inductively by
\begin{equation*}
\ast_i=
    \begin{cases*}
        + & if $\left(\bigcap_{j=1}^{i-1}N^{\ast_j}(u_j)\right)\cap N^+(u_i)$ is infinite, \\
        - & otherwise.
    \end{cases*}
\end{equation*}
Let $V^+=\{u_i\in V(K): \ast_i=+\}$ and let $V^-=\{u_i\in V(K): \ast_i=-\}$.  The key property is that for all $\diamond, \ast\in \{+,-\}$ and all finite non-empty subsets $X\subseteq V^\diamond$ and $Y\subseteq V^{\ast}$, $N^\diamond(X)\cap N^{\ast}(Y)$ is infinite. (A more standard approach to assigning the $\ast_i$ would have been to choose an ultrafilter on $\NN$ and let $\ast_i=\diamond$ iff $N^\diamond(u_i)$ is in the ultrafilter.  We note that our assignment of $\ast_i$ without the use of ultrafilters is inspired by the proof of \cite[Lemma~3.4]{L}.)

If $\ast_1=+$, then we choose a vertex $v_1\in V(G)$ with in-degree 0 and apply Lemma~\ref{lem:cover2} to get a $\pm$-partition $\{C_i:i\in \NN\}$ of $G$ with $C_1=\{v_1\}$.  If $\ast_1=-$, then we choose a vertex $v_1\in V(G)$ with out-degree 0 and apply Lemma~\ref{lem:cover2} to get a $\mp$-partition $\{C_i:i\in \NN\}$ of $G$ with $C_1=\{v_1\}$.  We may suppose without loss of generality that $\ast_1=+$ and thus we choose a vertex $v_1\in V(G)$ with in-degree 0 and apply Lemma~\ref{lem:cover2} to get a $\pm$-partition $\{C_i:i\in \NN\}$ of $G$ with $C_1=\{v_1\}$.
Finally, define
\begin{equation*}
\diamond_i=
    \begin{cases*}
        + & if i \text{ is odd}, \\
        - & if i \text{ is even}
    \end{cases*}
\end{equation*}
and note that $\diamond_i$ simply describes the type of the set $C_i$.  

We construct a sequence $i_1\leq i_2\leq \ldots$, growing an embedding $\varphi:G[\cup_{i\in[i_j]}C_i]\to K$ as we do so, such that following properties hold for every $j\in\NN$.
\stepcounter{capitalcounter}
\begin{enumerate}[label = {\bfseries{\Alph{capitalcounter}\arabic{enumi}}}]
\item\label{embed-1} $\{u_1,\ldots,u_j\}\subseteq \varphi(\cup_{i\in[i_j]}C_i)$, and
\item\label{embed-2} $\varphi(C_{i_j})\subseteq V^{\diamond_{i_j}}$.
\end{enumerate}
If such a sequence exists, then by \ref{embed-1}, the resulting embedding $\varphi:G\to K$ proves the theorem.

We initially set $i_1=1$ and $\varphi(v_1)=u_1$. Then, given $i_{j-1}$ and $\varphi:G[\cup_{i\in[i_{j-1}]}C_i]\to K$ satisfying \ref{embed-1} and \ref{embed-2}, we proceed as follows.

If $u_j\in \varphi(\cup_{i\in[i_{j-1}]}C_i)$, then set $i_j=i_{j-1}$ (trivially, \ref{embed-1} and \ref{embed-2} are satisfied). Otherwise, 
by \ref{embed-2} we have that $U_{j-1}:=N^{\ast_j}(u_j)\cap N^{\diamond_{i_{j-1}}}(\varphi(C_{i_{j-1}}))$ is infinite.  If $U_{j-1}\cap V^+$ is infinite, set $i_j$ to be the smallest integer at least $i_{j-1}+5$ with $\diamond_{i_j}=+$ (i.e. the smallest odd integer at least $i_{j-1}+5$). Otherwise, $U_{j-1}\cap V^-$ is infinite and we set $i_j$ to be the smallest integer at least $i_{j-1}+5$ with $\diamond_{i_j}=-$ (i.e. the smallest even integer at least $i_{j-1}+5$).  We now embed the acyclic finite subgraph $G[C_{i_{j-1}+1}\cup\ldots\cup C_{i_j}]$ into the infinite tournament $K[\{u_j\}\cup (U_{j-1}\cap V^{\diamond_{i_j}})]$ in such a way that if $\ast_j=\diamond_{i_{j-1}}$, then we will choose a vertex $v_j\in C_{i_{j-1}+2}$ which only has $\ast_j$-neighbors and embed $v_j$ to $u_j$, and if $\ast_j\neq \diamond_{i_{j-1}}$, then we will choose a vertex $v_j\in C_{i_{j-1}+3}$ which only has $\ast_j$-neighbors and embed $v_j$ to $u_j$.  Thus \ref{embed-1} is satisfied.  Also note that by construction, every vertex in $C_{i_j}$ is embedded into $V^{\diamond_{i_j}}$, so \ref{embed-2} is satisfied.\footnote{In the above paragraph, it is instructive to have a specific example, so suppose $\varphi(C_{i_{j-1}})\subseteq V^+$ (i.e. $i_{j-1}$ is odd), $u_j\in V^-$, and $(N^-(u_j)\cap N^+(\varphi(C_{i_{j-1}}))\cap V^-$ is infinite.  In this case we would set $i_j=i_{j-1}+5$ (note that $i_j$ is even), embed a vertex from $C_{i_{j-1}+3}$ ($i_{j-1}+3$ is also even) with in-degree 0 to $u_j$ and embed the rest of $C_{i_{j-1}+1}\cup \dots \cup C_{i_{j}}$ into $(N^-(u_j)\cap N^+(\varphi(C_{i_{j-1}}))\cap V^-$.  Note that since $i_j$ is even and $\varphi(C_{i_j})\subseteq V^-$, \ref{embed-2} is satisfied.}
\end{proof}

\section{Uncountable oriented graphs}\label{sec:uncount}

Let $\kappa$ be a cardinal and let $G$ be an oriented graph with $|V(G)|\leq \kappa$.  We say that $G$ is \emph{$\kappa$-unavoidable} if $G$ is contained in every tournament $K$ with $|V(K)|= \kappa$, otherwise we say that $G$ is \emph{$\kappa$-avoidable}.  We say that $G$ is \emph{strongly $\kappa$-unavoidable} if $G$ is a spanning subgraph of every tournament $K$ with $|V(K)|= \kappa$.  

The purpose of this section is to discuss the relationships between the following analogues of \ref{c1}-\ref{c4} for oriented graphs $G$ with $|V(G)|=\kappa$ where $\kappa$ is uncountable.  

\begin{enumerate}[label = {(\bfseries{U\arabic{enumi}})}]
\item\label{u1} $G$ is strongly $\kappa$-unavoidable.
\item\label{u2} $G$ is $\kappa$-unavoidable.
\item\label{u3} $G\subseteq K_{\kappa}$ and $G\subseteq K_{\kappa^*}$
\item\label{u4} $G$ is acyclic, has no infinite directed paths, and every vertex has degree less than $\kappa$ in the transitive closure of $G$.
\end{enumerate}

As before, we trivially have \ref{u1}$\Rightarrow$\ref{u2}$\Rightarrow$\ref{u3}, and as before it is not hard to see that \ref{u3}$\Rightarrow$\ref{u4}.  At this point the reader may wonder why we have \ref{u4} instead of the following:
\begin{enumerate}
\item[{\crtcrossreflabel{(\bfseries{U4$'$})}[u4']}] $G$ is acyclic, has no infinite directed paths, and every vertex  has degree less than $\kappa$.
\end{enumerate}

The reason is that if $\kappa$ is a regular cardinal, then \ref{u4} and \ref{u4'} are equivalent, but if $\kappa$ is singular, they are not (see Observation \ref{obs:singular1}).  

We prove the following results.

\begin{theorem}\label{thm:u3u4}
For all infinite cardinals $\kappa$, \ref{u3}$\Leftrightarrow$\ref{u4}.
\end{theorem}

\begin{theorem}\label{thm:reg}
If $\kappa$ is an uncountable regular cardinal, then \ref{u1}$\Leftrightarrow$\ref{u2}.
\end{theorem}

\begin{theorem}\label{thm:aleph1}
If $\kappa=\aleph_1$, then \ref{u4}$\Rightarrow$\ref{u1}.
\end{theorem}

\begin{theorem}\label{thm:strong}
If $\kappa$ is a strongly inaccessible cardinal, then \ref{u4}$\Rightarrow$\ref{u1}.
\end{theorem}

The natural generalization of Theorem \ref{thm:unavoidable-bijection} to the uncountable case would be that \ref{u4} implies \ref{u1} for all $\kappa$.  However, the following theorem shows that if $\lambda$ is an infinite cardinal at which GCH fails (i.e., $2^{\lambda} > \lambda^{+}$)
then there is a graph $G$ with $|V(G)| = 2^{\lambda}$ (trivially extending the graph $H$ from the theorem) satisfying \ref{u4} but not \ref{u2}.

\begin{theorem}\label{thm:GCH}
	For all infinite cardinals $\lambda$, there exists a transitive tournament $T$ on $2^{\lambda}$ and an acyclic oriented graph $H$ with no infinite directed paths and $|V(H)|=\lambda^+$ such that $H$ does not embed into $T$. 
\end{theorem}

In light of Theorems \ref{thm:reg} and \ref{thm:GCH}, one pressing open question is whether the assumption that a graph $G$ embeds into every transitive tournament of cardinality $\kappa$ gives any further structural information about $G$ beyond what \ref{u3} provides (in order to find a potential replacement for \ref{u4}).

Theorem \ref{thm:GCH} also leaves open the question of whether (or when) the GCH implies the equivalence of \ref{u2} and \ref{u4}. One could also ask the following question (which necessarily falls short of giving a characterization for \ref{u1}).

\begin{question}\label{q:lambda}
Let $\kappa$ be an uncountable cardinal.  For which cardinals $\mu\leq \kappa$ and $\lambda<\kappa$ is the following true?
\begin{enumerate}
\item\label{q1} If $G$ is an oriented graph with $|V(G)|=\mu$ such that $G$ is acyclic, has no infinite directed paths, and every vertex has degree at most $\lambda$, then $G$ is $\kappa$-strongly-unavoidable.

\item\label{q2} If $\kappa$ is singular and $G$ is an oriented graph with $|V(G)|=\kappa$ such that $G$ is acyclic, has no infinite directed paths, and there exists a $\nu<\kappa$ such that every vertex has degree at most $\nu$, then $G$ is $\kappa$-strongly-unavoidable.\footnote{This is weaker than \ref{u4}$\Rightarrow$\ref{u1} for singular $\kappa$ because there is a uniform upper bound on the degrees.}
\end{enumerate}
\end{question}

\subsection{Characterizing oriented graphs which embed into both $K_{\kappa}$ and $K_{\kappa^*}$}

In this section we prove Theorem \ref{thm:u3u4}.  We begin by observing that \ref{u3} implies \ref{u4}. 

\begin{observation}\label{obs:u3u4}
Let $\kappa$ be an infinite cardinal and let $G$ be an oriented graph with $|V(G)|\leq \kappa$.  If $G\subseteq K_{\kappa}$ and $G\subseteq K_{\kappa^*}$, then $G$ is acyclic, $G$ has no infinite directed paths, and every vertex has degree less than $\kappa$ in the transitive closure of $G$.  
\end{observation}

\begin{proof}
Note that $K_{\kappa}$ and $K_{\kappa^*}$ are acyclic, $K_{\kappa}$ has no infinite backwards directed paths and every vertex in $K_{\kappa}$ has in-degree less than $\kappa$, and $K_{\kappa^*}$ has no infinite forwards directed paths and every vertex in $K_{\kappa^*}$ has out-degree less than $\kappa$.  So if $G\subseteq K_{\kappa}$ and $G\subseteq K_{\kappa^*}$, then the transitive closure $\vec{G}$ of $G$ also satisfies $\vec{G}\subseteq K_{\kappa}$ and $\vec{G}\subseteq K_{\kappa^*}$ and thus $G$ is acyclic, has no infinite forward or backward directed paths, and every vertex of $\vec{G}$ has in-degree and out-degree less than $\kappa$.
\end{proof}

The reverse implication can essentially be found in \cite[2.15.1]{F}, where it is attributed to Milner and Pouzet.  

\begin{proposition}\label{prop:MP}
Let $\kappa$ be an infinite cardinal and let $G$ be an acyclic transitive oriented graph with $|V(G)|\leq \kappa$.  If $G$ has no infinite directed paths and every vertex has degree less than $\kappa$, then $G\subseteq K_{\kappa}$ and $G\subseteq K_{\kappa^*}$.
\end{proposition}

By applying Proposition \ref{prop:MP} to the transitive closure of $G$, we obtain \ref{u4}$\Rightarrow$\ref{u3}.

\subsection{Regular cardinals}

In this section we prove Theorems \ref{thm:reg}, \ref{thm:aleph1}, and \ref{thm:strong}.  We begin with a few preliminary results, the first of which just follows from the definition of regular cardinal.

\begin{observation}\label{obs:reg_components}
Let $\kappa$ be an uncountable cardinal and let $G$ be an acyclic oriented graph with $|V(G)|= \kappa$.  
If $\kappa$ is a regular cardinal and every vertex in $G$ has degree less than $\kappa$, then the transitive closure of $G$ has $\kappa$ many connected components each of which has cardinality less than $\kappa$.
\end{observation}

Next is the key lemma we will use for obtaining a surjective embedding.  

\begin{lemma}\label{lem:key1}
Let $\kappa$ be an infinite cardinal, let $G$ be an oriented graph with $|V(G)|\leq \kappa$, and let $K$ be a tournament with $|V(K)|=\kappa$.  If $G$ is $\kappa$-unavoidable, then for each $v\in V(K)$ there exists an embedding $\varphi:G\to K$ such that $v\in \varphi(V(G))$.
\end{lemma}

\begin{proof}
Let $v\in V(K)$.  Either $v$ is incident with $\kappa$ many out-edges or $\kappa$ many in-edges.  If it is the former, let $u\in V(G)$ have in-degree 0 and set $K'=K[N^+(v)]$. If it is the latter, let $u\in V(G)$ have out-degree 0 and set $K'=K[N^-(v)]$.  In either case set $\varphi(u)=v$.  Since $G$ is $\kappa$-unavoidable, we can embed $G-u$ in $K'$, which gives us an embedding $\varphi:G\to K$ such that $v\in \varphi(V(G))$.
\end{proof}

The following lemma gives a sufficient condition for obtaining a surjective embedding.  

\begin{lemma}\label{lem:key2}
Let $\kappa$ be an uncountable regular cardinal and let $G$ be an oriented graph with $|V(G)|=\kappa$.
If $G$ has $\kappa$ many connected components of cardinality less than $\kappa$, each of which is $\kappa$-unavoidable, then $G$ is strongly $\kappa$-unavoidable.
\end{lemma}

\begin{proof}
Let $K$ be a tournament of cardinality $\kappa$, let $(v_\gamma)_{\gamma\in \kappa}$ be an enumeration of $V(K)$, and let $(G_\alpha)_{\alpha\in \kappa}$ be an enumeration of the components of $G$.  We construct a surjective embedding $\varphi \colon G\to K$ as the union of a recursively chosen collection of embeddings $\varphi_{\alpha} \colon G_{\alpha} \to K$ with disjoint ranges.

Suppose that $\alpha \in \kappa$ and that $\varphi_{\beta}$ ($\beta < \alpha$) have been chosen. Since $\kappa$ is regular and each $G_{\beta}$ has cardinality less than $\kappa$, $K_{\alpha} = K - \bigcup_{\beta < \alpha}\ran(\varphi_{\beta})$ has cardinality $\kappa$. 
Let $\gamma_{\alpha}\in \kappa$ be minimal such that $v_{\gamma_{\alpha}}$ is in $K_{\alpha}$.  
Apply Lemma \ref{lem:key1} to find an embedding $\varphi_{\alpha} \colon G_{\alpha} \to K_\alpha$ in such a way that $v_{\gamma_\alpha}$ is in the range of $\varphi_{\alpha}$. 
Having chosen $\varphi_{\alpha}$ for all $\alpha < \kappa$, let $\varphi = \bigcup_{\alpha < \kappa}\varphi_{\alpha}$.  Since every vertex $v_{\gamma}$ is in the range of some $\varphi_{\alpha}$, this completes the proof.
\end{proof}

\begin{proof}[Proof of Theorem \ref{thm:reg}]
Since $G$ is $\kappa$-unavoidable, by Observation \ref{obs:u3u4}, $G$ is acyclic, has no infinite directed paths, and every vertex (in the transitive closure of $G$) has degree less than $\kappa$.  So by Observation \ref{obs:reg_components}, $G$ has $\kappa$-many connected components all of which are $\kappa$-unavoidable.  Thus by Lemma \ref{lem:key2}, $G$ is strongly $\kappa$-unavoidable.  
\end{proof}

We note that the previous proof does not apply to singular cardinals because for singular cardinals $\kappa$, \ref{u4} does not imply that $G$ has $\kappa$ many components (in fact $G$ can even be connected in this case -- see Observation \ref{obs:singular2}) and thus we cannot make use of Lemma \ref{lem:key2}.

In Section \ref{sec:charactizing-unavoidability} we showed that \ref{c3}$\Rightarrow$\ref{c2} by using Ramsey's theorem.  If there was an analogue of Ramsey's theorem for the uncountable case which implied \ref{u3}$\Rightarrow$\ref{u2}, then from Theorem \ref{thm:u3u4} and Theorem \ref{thm:reg} we would already have that \ref{u1}-\ref{u4} are equivalent.  However, it is known by a result of Laver (see \cite{GS}) that there is a tournament of cardinality $\aleph_1$ which doesn't contain any transitive subtournament of cardinality $\aleph_1$.  (Furthermore, Justin Moore pointed out to us that results of Rinot and Todorcevic \cite{RinTod} can be used to show that the corresponding fact holds at the successor of any regular cardinal.)

On the other hand a result of Baumgartner and Hajnal \cite{BH} shows that the next best thing is true; that is, for every countable ordinal $\alpha$ and every 2-coloring of the pairs in $\omega_1$, there is a monochromatic copy of $\alpha$, which by Lemma \ref{lem:ramsey} implies that every tournament of cardinality $\aleph_1$ contains $K_{\alpha}$ or $K_{\alpha^*}$. (Note that while Baumgartner and Hajnal use some deep set-theoretic ideas to establish their result in ZFC, Galvin \cite{G} later gave a direct combinatorial proof in ZFC.)

\begin{theorem}[Baumgartner-Hajnal; Galvin]\label{thm:BHG}
	Let $K$ be an uncountable tournament.  For every countable ordinal $\alpha$ we have $K_{\alpha}\subseteq K$ or $K_{\alpha^*}\subseteq K$.
\end{theorem}

Using Theorem \ref{thm:BHG} we are able to establish \ref{u4}$\Rightarrow$\ref{u1} for $\kappa=\aleph_1$.  First we need another variant on Szpilrajn's extension theorem (see the comment before Lemma \ref{lem:order1}) which is essentially equivalent to the fact that every well-founded partial order of cardinality $\kappa$ can be extended to a well-order of cardinality $\kappa$.  A sketch of a proof can be found in \cite[2.9.2]{F}, but we give a full proof for completeness.  

\begin{proposition}\label{prop:1}
	Let $\kappa$ be an infinite cardinal and let $G$ be an acyclic oriented graph with $|V(G)|\leq \kappa$.  If $G$ has no infinite backward directed paths, then there exists an ordinal $\beta$ of cardinality at most $\kappa$ such that $G\subseteq K_{\beta}$.  Likewise if $G$ has no infinite forward directed paths, then there exists an ordinal $\beta$ of cardinality at most $\kappa$ such that $G\subseteq K_{\beta^*}$. 
\end{proposition}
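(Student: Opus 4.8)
The plan is to reduce the statement to a purely order-theoretic fact: a well-founded partial order of cardinality at most $\kappa$ order-embeds into an ordinal of cardinality at most $\kappa$. The relevant observation is that since $K_\beta$ is transitive, an embedding $\phi\colon G\to K_\beta$ is nothing more than an injection $\phi\colon V(G)\to\beta$ with $\phi(u)<\phi(v)$ whenever $(u,v)\in E(G)$; by transitivity of $<$, such a $\phi$ automatically satisfies $\phi(u)<\phi(v)$ along every directed path, so it also order-embeds the transitive closure of $G$. Thus the task is to produce a suitable well-order on $V(G)$ that is compatible with the edge directions, and to control its order type. I would first record that the hypothesis ``no infinite backward directed path'' is precisely the well-foundedness of the in-neighbor relation: an infinite descending chain for this relation is an infinite walk $v_0,v_1,v_2,\dots$ with $(v_{i+1},v_i)\in E(G)$, and since $G$ is acyclic a repeated vertex would produce a directed cycle, so the $v_i$ are distinct and the walk is literally an infinite backward directed path. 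Hence acyclicity collapses ``no infinite backward directed path'' to ``the in-neighbor relation is well-founded.''

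Given well-foundedness, I would define a rank function by well-founded recursion,
$$\operatorname{rk}(v)=\sup\{\operatorname{rk}(u)+1 : u\in N^-(v)\},$$
with values in the ordinals. An easy induction gives $(u,v)\in E(G)\Rightarrow\operatorname{rk}(u)<\operatorname{rk}(v)$. Writing $L_r=\{v:\operatorname{rk}(v)=r\}$ for the level sets, the $L_r$ partition $V(G)$ and each is an antichain. I would then fix, for each $r$, a well-ordering of $L_r$ of order type $\gamma_r$ (so $|\gamma_r|=|L_r|$), and set $\beta=\sum_{r}\gamma_r$, the ordinal sum taken over the ranks in increasing order. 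Identifying $\beta$ with the lexicographically ordered disjoint union of the blocks $\gamma_r$ yields an injection $\phi\colon V(G)\to\beta$ sending $v$ to $\bigl(\sum_{r'<\operatorname{rk}(v)}\gamma_{r'}\bigr)+\xi_v$, where $\xi_v<\gamma_{\operatorname{rk}(v)}$ is the position of $v$ in the chosen well-ordering of its level. If $(u,v)\in E(G)$ then $\operatorname{rk}(u)<\operatorname{rk}(v)$, so $\phi(u)$ lies in a strictly earlier block and $\phi(u)<\phi(v)$; hence $\phi$ witnesses $G\subseteq K_\beta$.

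The cardinality bookkeeping is the place where the construction pays off: although the height $\sup_v(\operatorname{rk}(v)+1)$ may be large, we have $|\beta|=\sum_r|\gamma_r|=\sum_r|L_r|=|V(G)|\le\kappa$, because the $L_r$ partition $V(G)$, so the possibly large number of levels does not inflate the cardinality. If $|V(G)|=\kappa$ we are finished; if $|V(G)|<\kappa$ then $\beta<\kappa$ as ordinals, so $K_\beta$ is an initial segment of $K_\kappa$ and we may replace $\beta$ by $\kappa$, which has cardinality $\kappa$. The ``likewise'' clause follows by symmetry: applying the first part to the reverse orientation $G^{\mathrm{rev}}$ (whose backward directed paths are exactly the forward directed paths of $G$) produces $G^{\mathrm{rev}}\subseteq K_\beta$, which upon reversing the target order gives $G\subseteq K_{\beta^*}$.

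I expect the one genuinely delicate step to be the equivalence between the hypothesis and well-foundedness. The subtlety is that well-foundedness of the in-neighbor relation must rule out \emph{every} infinite descending chain, not merely a single infinite backward directed path, and one must invoke acyclicity to see that any such chain is automatically repeat-free and hence an honest backward directed path. Once that is in place, the rank-and-levels construction and the identity $|\beta|=|V(G)|$ are routine transfinite bookkeeping, with the only mild care being the initial-segment argument used to pad the order type up to cardinality exactly $\kappa$.
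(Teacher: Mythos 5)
Your proof is correct and follows essentially the same route as the paper's: a rank function defined by well-founded recursion (the paper uses $\Gamma^-$ rather than $N^-$, which gives the same levels), a well-ordering of each level, and a lexicographic embedding into the resulting ordinal. You are in fact somewhat more careful than the paper, spelling out why acyclicity makes ``no infinite backward directed path'' equivalent to well-foundedness and padding $\beta$ up to cardinality exactly $\kappa$.
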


\begin{proof}
	We prove the first conclusion; the proof of the second is essentially the same. Let $h$ be the function from $G$ to the ordinals defined recursively by the formula \[h(x)=\sup\{h(w)+1: w\in \Gamma^-(x)\},\] for all $x\in V(G)$.  Given an ordinal $\alpha$, let $V_\alpha$ be $\{x\in V(G): h(x)=\alpha\}$.  Let $\beta_0=\{\alpha: V_\alpha\neq \emptyset\}$ (which is an ordinal).  Note that $\beta_{0}$ has cardinality at most $\kappa$, since otherwise $G$ has more than $\kappa$-many non-empty levels, which means $|V(G)|> \kappa$, giving a contradiction. Let $\beta = \sum_{\alpha< \beta_{0}}|V_{\alpha}|$. Then again $|\beta| \leq \kappa$. 
	
	For each $\alpha< \beta_{0}$, let $(v^\alpha_\gamma)_{\gamma<|V_{\alpha}|}$ be an enumeration of $V_{\alpha}$.  Then there exists an injection $\varphi \colon G\to K_{\beta}$ such that $\varphi(v^{\alpha'}_{\gamma'})<\varphi(v^\alpha_\gamma)$ if and only if $\alpha'<\alpha$, or $\alpha'=\alpha$ and $\gamma'<\gamma$.
\end{proof}

We now prove that \ref{u4}$\Rightarrow$\ref{u1} for $\kappa=\aleph_1$.  

\begin{proof}[Proof of Theorem \ref{thm:aleph1}]
By Observation \ref{obs:reg_components}, $G$ has $\aleph_1$ many components each of cardinality at most $\aleph_0$.  By Proposition \ref{prop:1}, for every component $H$ of $G$ there exists a countable ordinal $\alpha$ such that $H$ embeds into both $K_\alpha$ and $K_{\alpha^*}$.  By Theorem \ref{thm:BHG}, every tournament of cardinality $\aleph_1$ contains either $K_\alpha$ or $K_{\alpha^*}$, and thus $H$ is $\aleph_1$-unavoidable.

So $G$ has $\aleph_1$ many components each of which is $\aleph_1$-unavoidable and thus by applying Lemma \ref{lem:key2}, we have that $G$ is strongly $\aleph_1$-unavoidable.  
\end{proof}

\begin{remark}
The proof of Theorem \ref{thm:aleph1} also shows that Question \ref{q:lambda}.(i) has a positive answer whenever $\lambda=\aleph_0$.
\end{remark}

Note that the proof of Theorem \ref{thm:aleph1} would generalize to any uncountable regular cardinal $\kappa$ provided there was a generalization of Theorem \ref{thm:BHG} for $\kappa\geq \omega_2$ (that is, for every ordinal $\alpha<\kappa$, every tournament $K$ of cardinality $\kappa$ contains $K_{\alpha}$ or $K_{\alpha^*}$).  As it turns out, this is a problem first raised by Erd\H{o}s and Hajnal \cite[Problem 10]{EH1} and it is still an open question whether it is consistent that any successor cardinal $\kappa \geq \omega_{2}$ can have this property (see \cite[9.1]{FH} and \cite[Question 5.1]{Fo}).  As we shall see in Section \ref{sec:GCH}, consistently this property fails at every double successor cardinal. 

Finally, we consider the case of strongly inaccessible cardinals.  A cardinal $\kappa$ is \emph{strongly inaccessible} if it is regular and a strong limit; that is, $2^{\lambda} < \kappa$ for all $\lambda < \kappa$.  Taking the place of Theorem \ref{thm:BHG} in this case is the following classical result of Erd\H{o}s \cite{E} (stated here in slightly less general form and using Lemma \ref{lem:ramsey} as we did for Theorem \ref{thm:BHG}).

\begin{theorem}[Erd\H{o}s]\label{thm:E}
For all infinite cardinals $\lambda$, every tournament $K$ of cardinality $(2^\lambda)^+$ contains $K_{\lambda^+}$ or $K_{(\lambda^+)^*}$. 
\end{theorem}

Now it is a routine matter to complete the proof.  

\begin{proof}[Proof of Theorem \ref{thm:strong}]
Since $\kappa$ is regular, Observation \ref{obs:reg_components} implies that $G$ has $\kappa$ many components each of cardinality less than $\kappa$.  By Proposition \ref{prop:1}, every component $H$ of $G$ of cardinality $\lambda$ embeds into $K_{\lambda^+}$ and $K_{(\lambda^+)^*}$.  Since $(2^{\lambda})^+ < \kappa$, Theorem \ref{thm:E} implies that every tournament of cardinality $\kappa$ contains contains $K_{\lambda^+}$ or $K_{(\lambda^+)^*}$, and thus $H$ is $\kappa$-unavoidable. 

So $G$ has $\kappa$ many components each of which is $\kappa$-unavoidable and thus by applying Lemma \ref{lem:key2}, we have that $G$ is strongly $\kappa$-unavoidable.  
\end{proof}

\begin{remark} The proof of Theorem \ref{thm:strong} also shows that Question \ref{q:lambda}.(i) has a positive answer whenever $\kappa \geq (2^{\lambda})^{+}$. 
\end{remark}

\subsection{Singular cardinals}

In this section we collect two observations about the case of singular $\kappa$. 
We first show that \ref{u4} and \ref{u4'} are not equivalent for singular cardinals.  

\begin{observation}\label{obs:singular1}
Let $\kappa$ be a singular cardinal.  There exists a graph $G$ with $|V(G)|=\kappa$ in which $G$ is acyclic, has no infinite directed paths, and every vertex has degree less than $\kappa$, but some vertex has degree $\kappa$ in the transitive closure of $G$.  
\end{observation}

\begin{proof}
Let $\langle \lambda_\alpha: \alpha<\cof(\kappa)\rangle$ be an increasing cofinal sequence of regular cardinals less than $\kappa$.
	
Let $X=\{x_\alpha:\alpha<\cof(\kappa)\}$ be a set of vertices such that each $x_\alpha$ has a distinct set of $\lambda_\alpha$ many in-neighbors and let $Z=\{z_\alpha:\alpha<\cof(\kappa)\}$ be a set of vertices such that each $z_\alpha$ has a distinct set of $\lambda_\alpha$ many out-neighbors.  Then add a vertex $y$ such that $N^-(y)=X$ and $N^+(y)=Z$.  Call the resulting oriented graph $G$.  Note that $y$ has in-degree and out-degree equal to $\cof(\kappa)<\kappa$ and every other vertex has in-degree and out-degree at most $\lambda_\alpha<\kappa$ for some $\alpha<\cof(\kappa)$.  If say $G\subseteq K_{\kappa}$, then since $K_{\kappa}$ is transitive, the transitive closure, $\vec{G}$, of $G$ satisfies $\vec{G}\subseteq K_{\kappa}$ (likewise if $G\subseteq K_{\kappa^*}$).  However, in $\vec{G}$ it is the case that $y$ has in-degree and out-degree equal to $\kappa$.  So by Observation \ref{obs:u3u4}, we have that $\vec{G}\not\subseteq K_{\kappa}$ and $\vec{G}\not\subseteq K_{\kappa^*}$, a contradiction.
\end{proof}

We next explain why \ref{u4} does not imply that $G$ has $\kappa$-many components in the singular case and at the same time show that having $\kappa$-many components is not a necessary condition for \ref{u2}. 

\begin{observation}\label{obs:singular2}
There exists a graph $G$ with $|V(G)|=\aleph_{\omega}$ in which $G$ is acyclic, has no infinite directed paths, every vertex has degree less than $\aleph_{\omega}$ in the transitive closure of $G$, but $G$ is connected and $\aleph_{\omega}$-unavoidable.
\end{observation}

\begin{proof}
Let $A_n$ $(n \in \omega)$, be disjoint sets, with each $A_n$ of cardinality $\aleph_n$, and let $b_n$ $(n \in \omega)$, be an additional set of vertices. Let $G$ be the graph with vertex set 
	\[\{b_n : n \in \omega\} \cup \bigcup_{n\in\omega}A_{n}\] and the following edges: each $b_n$ points to all the vertices in $A_n$, and $b_n$ points to $b_m$ whenever $|n-m| =1$ and $n$ is odd. Any tournament on $\aleph_\omega$ will have distinct vertices $c_n$ $(n \in \omega)$, where $c_n$ points to at least $\aleph_n$ many elements. Thinning via Ramsey's theorem for pairs of integers, we can assume that either $m < n$ implies that $c_n$ points to $c_m$, or $m < n$ implies that $c_m$ points to $c_n$. Either way we can embed the $b_n$'s into the $c_n$'s in such a way that each $b_n$ is sent to a $c_m$ with $m \geq n$.
\end{proof}

\subsection{\ref{u4} does not imply \ref{u2} in general}\label{sec:GCH}

In this section we proof Theorem \ref{thm:GCH}. 
First we introduce the following graphs satisfying \ref{u4} which will be used in the proof.  Let $\lambda$ be an ordinal and let $H(\lambda)$ be the oriented graph where the vertices of $H(\lambda)$ are pairs $(\alpha, \beta)\in \lambda \times \lambda$, and $H(\lambda)$ contains an edge from $(\alpha, \beta)$ to $(\gamma, \delta)$ if and only if 
$\alpha > \gamma$ and $\beta < \delta$.  Note that $H$ is a transitive acyclic oriented graph with $|V(H)|=|\lambda|$, no infinite directed paths, and every vertex has degree at most $|\lambda|$.

Given an ordinal $\lambda$, the lexicographical order on $2^{\lambda}$ (viewed as the set of functions from $\lambda\to \{0,1\}$) is defined as follows: let $f <_{\lex} g$ if $f\neq g$ and $f(\alpha) < g(\alpha)$ where $\alpha$ is minimal such that $f(\alpha) \neq g(\alpha)$.  This is a linear ordering of $2^{\lambda}$ which contains no ascending or descending sequences of length $\lambda^{+}$. (One way to see this is to prove inductively on $\alpha \leq \lambda$ that all but at most $|\lambda|$ many members of each such sequence would have to have the same restriction to $\alpha$.)

We now show that, for any infinite cardinal $\lambda$, $H(\lambda^{+})$ does not embed into the tournament on $2^{\lambda}$ induced by the lexicographical order. This shows that if the GCH fails at $\lambda$ (i.e., if $2^{\lambda} \geq \lambda^{++}$), then \ref{u4} does not imply \ref{u2} for graphs of cardinality $\lambda^{++}$ (or, for that matter, for graphs of any cardinality in the interval $[\lambda^{++}, 2^{\lambda}]$). 

\begin{proof}[Proof of Theorem \ref{thm:GCH}]
Let $H:=H(\lambda^+)$ and note that $H$ is a transitive acyclic oriented graph with $|V(H)|=\lambda^+$, $H$ has no infinite directed paths, and every vertex has degree at most $\lambda^+$.  Let $K:=K_{\mathrm{lex}(2^\lambda)}$ be the tournament on $2^\lambda$ where $(f,g)$ is an edge of $K$ if and only if $f <_{\rm{lex}} g$.  We now show that there is no embedding of $H$ into $K$.

Suppose toward a contradiction that $\varphi \colon \lambda^{+} \times \lambda^{+} \to 2^{\lambda}$ were such an embedding; that is, for all $\alpha, \beta, \gamma, \delta \in \lambda^+$,  $$((\alpha, \beta), (\gamma, \delta)) \in E(H) \Rightarrow \varphi(\alpha, \beta) <_{\rm{lex}} \varphi(\gamma, \delta).$$  
By the way $H$ is defined, this implies that  
\begin{equation}\label{eq:H}
\alpha>\gamma \text{ and } \beta<\delta\Rightarrow \varphi(\alpha, \beta) <_{\rm{lex}} \varphi(\gamma,\delta).
\end{equation}

For each $\beta \in \lambda^{+}$, let 
	\begin{itemize}
		\item $X_{\beta}=\{ \varphi(\alpha, \beta) : \alpha \in \lambda^{+}\}$ and 
		\item $S_{\beta}$ be the set of $x \in 2^{\lambda}$ which are $<_{\rm{lex}}$-below $\lambda^{+}$ many members of $X_{\beta}$. 
		\end{itemize}
Since $2^{\lambda}$ has no descending $\lambda^{+}$-sequences, each set $X_{\beta} \cap S_{\beta}$ is nonempty. 
	
	We claim that for all $\beta < \delta$, every member of $S_{\beta}$ is $<_{\rm{lex}}$-below every member of $X_{\delta}$. To see this suppose that $x \in S_{\beta}$ is $<_{\rm{lex}}$-above some $\varphi(\gamma, \delta)$. Then since $x\in S_{\beta}$ and is $<_{\rm{lex}}$-below $\lambda^+$ many members of $X_\beta$, there exists an $\alpha > \gamma$ such that $\varphi(\gamma, \delta)<_{\lex} x<_{\lex}\varphi(\alpha, \beta)$, contradicting \eqref{eq:H}.

	Picking then, for each $\beta \in \lambda^+$, a member of $X_{\beta} \cap S_{\beta}$ gives an increasing sequence of length $\lambda^{+}$ in $K$, which is impossible.
\end{proof}

\section{Acknowledgements}

We thank Trevor Wilson for pointing out the reference \cite{BH}, and Justin Moore for the reference \cite{RinTod}.  We also thank an anonymous referee for their careful reading and helpful comments.

\end{document}